\newtheorem{theorem}{Theorem}[section]
\newtheorem{proposition}[theorem]{Proposition}
\newtheorem{lemma}[theorem]{Lemma}
\newtheorem{question}[theorem]{Question}
\theoremstyle{definition}
\theoremstyle{remark}
\newcommand{\HH}{\mathcal{H}}
\newcommand{\GG}{\mathcal{G}}
\newcommand{\mockalph}[1]{}
\tikzstyle{p}+=[fill=black, circle, minimum width = 1pt, inner sep =
\tikzstyle{w}+=[fill=white, draw, circle, minimum width = 1pt, inner sep =
\title{\vspace{-0.7cm}Common graphs with arbitrary connectivity and chromatic number}
\author{Sejin Ko\thanks{Department of Mathematics, 
		Hanyang University, Seoul. 
		Email: {\tt tpwls960104@hanyang.ac.kr}.}
\and
Joonkyung Lee\thanks{Department of Mathematics, 
		Yonsei University, Seoul.
		Email: \texttt{joonkyunglee}@\texttt{yonsei.ac.kr}.
		}
}
\date{}
\begin{document}
\maketitle

\begin{abstract}
A graph $H$ is \emph{common} if the number of monochromatic copies of $H$ in a 2-edge-colouring of the complete graph $K_n$ is asymptotically minimised by the random colouring.
We prove that, given $k,r>0$, there exists a $k$-connected common graph with chromatic number at least $r$. The result is built upon the recent breakthrough of Kr\'a\v{l}, Volec, and Wei who obtained common graphs with arbitrarily large chromatic number and answers a question of theirs. 
\end{abstract}

\section{Introduction}
A central concept in graph Ramsey theory is the \emph{Ramsey multiplicity} of a graph $H$, which counts the minimum number of monochromatic copies of $H$ in a 2-edge-colouring of the $n$-vertex complete graph~$K_n$.
There are some graphs $H$, the so-called \emph{common graphs}, such that the number of monochromatic $H$-copies in a 2-edge-colouring of $K_n$ is asymptotically minimised by the random colouring. 
For example, Goodman's formula~\cite{G59} implies that a triangle $K_3$ is common, which is one of the earliest results in the area.

Partly inspired by Goodman's formula, Erd\H{o}s~\cite{E62common} conjectured that every complete graph is common. This was subsequently generalised by Burr and Rosta~\cite{BR80}, who conjectured that every graph is common. In the late 1980s, both conjectures were disproved by Thomason~\cite{Thom89} and by Sidorenko~\cite{Sid89common}, respectively.
Since then, there have been numerous attempts to find new common (or uncommon) graphs, e.g., \cite{F08,JST96,Sid96}. Although the complete classification seems to be still out of reach,
new common graphs have been found during the last decade by using some advances on Sidorenko's conjecture~\cite{Sid93} or the computer-assisted flag algebra method~\cite{R07}.
For more results along these lines, we refer the reader to one of the most recent results~\cite{CL20} on Sidorenko's conjecture and some applications of the flag algebra method~\cite{GLLV22,HHKNR12} with references therein.

Despite all these studies on common graphs, all the known common graphs only had chromatic numbers at most four. This motivated a natural question, appearing in~\cite{CFS15survey,HHKNR12}, to find a common graphs with arbitrarily large chromatic number.
This question remained open until its very recent resolution by Kr\'a\v{l}, Volec, and Wei~\cite{KVW22}.
Since their construction connects a graph with high chromatic number and girth to a copy of a complete bipartite graph by a long path, they asked~\cite[Problem 25]{KVW22} if highly connected common graphs with large chromatic number exist. We answer this question in the affirmative.

\begin{theorem}\label{main}
Let $k$ and $r$ be positive integers. Then there exists a $k$-connected common graph with chromatic number at least $r$. 
\end{theorem}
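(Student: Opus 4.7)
The plan is to emulate Kr\'a\v{l}--Volec--Wei~\cite{KVW22}'s construction, replacing the single linking path with a $k$-fold bridge that preserves $k$-connectivity. Let $g$ be a large integer (depending on $k$, $r$, and a path length $\ell$) to be specified. Standard modifications of Erd\H{o}s's probabilistic argument produce a $k$-connected graph $F$ with $\chi(F) \geq r$ and girth at least $g$---for instance, one may sample a random $d$-regular graph of girth $\geq g$ with $d$ sufficiently large relative to $k$, then discard the low-connectivity realisations, which form a negligible fraction. Fix $t \geq k$, distinct vertices $u_1, \dots, u_k \in V(F)$, and distinct vertices $v_1, \dots, v_k \in V(K_{t,t})$, and form $H = H(F, t, \ell)$ by taking disjoint copies of $F$ and $K_{t,t}$ and adding $k$ internally vertex-disjoint paths of length $\ell$, the $i$-th of which joins $u_i$ to $v_i$.

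Since both $F$ and $K_{t,t}$ are $k$-connected and $k$ internally disjoint paths join them, Menger's theorem yields that $H$ is $k$-connected. Clearly $\chi(H) \geq \chi(F) \geq r$ since $F \subseteq H$.

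The substance of the proof lies in showing that $H$ is common, i.e., $t(H, W) + t(H, 1 - W) \geq 2 \cdot 2^{-e(H)}$ for every graphon $W$. The plan is to revisit the density estimates of~\cite{KVW22}: in their analysis, the long path $P_\ell$ decouples the contribution of the chromatic part $F$ from the Sidorenko-like contribution of $K_{t,t}$, so that the guaranteed positive term from $K_{t,t}$ dominates. In our setting, the single path is replaced by $k$ parallel paths, so the expansion of $t(H, W) + t(H, 1-W) - 2 \cdot 2^{-e(H)}$ should split as a $k$-fold product over the bridges when conditioned on the endpoints, together with correction terms coming from the joint distribution of the $k$ bridge endpoints in $F$ and in $K_{t,t}$. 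Choosing $g$ and $\ell$ large enough should render these correction terms negligible in the relevant norms, while taking $t$ large enough inflates the Sidorenko-gain contributed by $K_{t,t}$ so as to absorb them. The principal obstacle is exactly making this quantitative: one must produce a strengthening of the KVW estimate that survives the $k$-fold bridge---a robust product-form lower bound for the joint contribution of the $k$ parallel paths, together with careful tracking of the parameter hierarchy (roughly $t \gg g \gg \ell \gg k, r$)---and this technical refinement is expected to be where most of the work lies.
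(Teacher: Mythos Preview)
Your proposal is not a proof: the entire weight of the theorem lies in establishing commonality of the modified construction, and you explicitly defer this (``should split'', ``should render'', ``is expected to be where most of the work lies''). The Kr\'a\v{l}--Volec--Wei analysis is delicate and makes essential use of the single bridging path; replacing it by $k$ parallel paths introduces joint dependencies among the $k$ endpoint pairs on each side, and it is not at all clear that the decoupling and domination arguments survive. Until you actually carry out the ``strengthening of the KVW estimate that survives the $k$-fold bridge'', you have only a plausibility argument, not a proof.

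The paper takes a genuinely different and much more economical route that sidesteps this difficulty entirely. Rather than modifying the KVW construction and re-proving commonality from scratch, it uses the KVW graph as a black box (any connected common graph $H$ with $\chi(H)\ge r$ will do) and then applies an operation---iterated $q$-books along independent sets, yielding a ``$q$-bookpile'' $H(q)$---that \emph{automatically preserves commonality}. The preservation is a two-line application of Jensen's inequality: $t(H_I^q,W)\ge t(H,W)^q$, and then convexity of $x\mapsto x^q$ gives the common inequality for $H_I^q$ from that for $H$. The chromatic number is trivially preserved as well. All the remaining work is purely structural: showing that for $q$ large the bookpile $H(q)$ becomes $k$-connected, which is handled via an auxiliary hypergraph and Menger-type arguments. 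So the paper never touches the analytic internals of the KVW proof, whereas your approach would require reopening and strengthening them.
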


We remark that this short follow-up note to the recent result only partially presents various in-depth studies on common graphs and relevant questions. For a modern review of a variety of results in the area, we refer the reader to recent articles~\cite{FW22,KVW22}.

\section{Proof of the main theorem}

A useful setting to analyse commonality of graphs is to use the modern theory of dense graph limits~\cite{L12}. A \emph{graphon} is a two-variable symmetric measurable function $W:[0,1]^2\rightarrow [0,1]$ and the \emph{homomorphism density} of a graph $H$ is defined by
\begin{align*}
    t(H,W) := \int \prod_{uv\in E(H)}W(x_u,x_v)~ d\mu^{V(H)},
\end{align*}
where $\mu$ denotes the Lebesgue measure on $[0,1]$.
In this language, a graph~$H$ is common if and only if $t(H,W)+t(H,1-W)\geq 2^{1-e(H)}$ for every graphon $W$, where $e(H)$ denotes the number of edges in $H$.

\medskip

The \emph{$q$-book} $H_I^q$ of~$H$ along an independent set $I\subseteq V(H)$ of $H$ is the graph obtained by taking $q$ vertex-disjoint copies of $H$ and identifying the corresponding vertices in $I$.
The following lemma is a straightforward consequence of Jensen's inequality.
\begin{lemma}\label{Jensen}
Let $I$ be an independent set of a graph $H$.
If $H$ is common, then $H_I^q$ is also common for every positive integer $q$.
\end{lemma}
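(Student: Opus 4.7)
The plan is to reduce commonality of $H_I^q$ to commonality of $H$ via two applications of Jensen's inequality.

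First, I would rewrite $t(H_I^q, W)$ as a $q$-th moment integral. Since the $q$ copies of $H$ inside $H_I^q$ share exactly the vertices of $I$, and since $I$ is independent so no edges within $I$ are counted or replicated, setting $g(x_I) := \int \prod_{uv \in E(H)} W(x_u, x_v) \, d\mu^{V(H)\setminus I}$ would factor the defining integral of $t(H_I^q, W)$ into
$$t(H_I^q, W) = \int g(x_I)^q \, d\mu^I,$$
and analogously $t(H_I^q, 1-W) = \int h(x_I)^q \, d\mu^I$ for the corresponding function $h$ built from $1-W$. Note that $t(H, W) = \int g \, d\mu^I$ and $t(H, 1-W) = \int h \, d\mu^I$.

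Next, applying Jensen's inequality to the convex function $t \mapsto t^q$ gives $t(H_I^q, W) \geq t(H, W)^q$ and $t(H_I^q, 1-W) \geq t(H, 1-W)^q$. A second application of the same convexity, combined with the commonality of $H$, then yields
$$t(H, W)^q + t(H, 1-W)^q \geq 2\paren{\frac{t(H, W) + t(H, 1-W)}{2}}^q \geq 2 \cdot 2^{-q \cdot e(H)} = 2^{1-q\cdot e(H)}.$$
Since $e(H_I^q) = q \cdot e(H)$, summing the two moment inequalities above gives exactly the commonality bound $t(H_I^q, W) + t(H_I^q, 1-W) \geq 2^{1 - e(H_I^q)}$.

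I do not anticipate any serious obstacle; the whole argument is a clean two-step Jensen calculation. The only place where anything structural about $H_I^q$ is used is the initial factorisation, which depends crucially on $I$ being independent: otherwise, factors $W(x_u, x_v)$ with $u,v \in I$ would sit outside the inner integral and the identity $t(H_I^q, W) = \int g^q \, d\mu^I$ would fail.
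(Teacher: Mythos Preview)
Your proposal is correct and follows essentially the same approach as the paper: two applications of Jensen's inequality, first to obtain $t(H_I^q,W)\geq t(H,W)^q$ and then to the sum, followed by invoking commonality of $H$ and $e(H_I^q)=q\cdot e(H)$. You spell out the factorisation $t(H_I^q,W)=\int g^q\,d\mu^I$ more explicitly than the paper does, but the argument is the same.
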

\begin{proof}
By a standard application of Jensen's inequality, the inequality $t(H_I^q,W) \geq t(H,W)^q$
holds for every graphon $W$. Therefore,
\begin{align*}
    t(H_I^q,W) + t(H_I^q,1-W) & \geq t(H,W)^q + t(H,1-W)^q \\
    &\geq 2\cdot\left(\frac{t(H,W)+t(H,1-W)}{2}\right)^q \\ 
    &\geq 2^{1-q\cdot e(H)} = 2^{1-e(H_I^q)},
\end{align*}
where the second inequality is again by convexity and the last inequality uses commonality of $H$. This proves commonality of $H_I^q$.
\end{proof}
To summarise, commonality is preserved under the $q$-book operation.
Another advantage of the operation is that it preserves chromatic numbers. Indeed, a proper colouring of $H$ can be naturally extended to $H_I^q$ by assigning the same colour as a vertex of $H$ to its `clones' in $H_I^q$. 
Our key idea is to repeatedly apply the $q$-book operation to a common graph $H$, which increases connectivity while maintaining the chromatic number and commonality of $H$. 

First, enumerate the vertices in an $r$-vertex graph $H$ by $V(H)=\{v_1,v_2,\cdots,v_r\}$. Let $H_0:=H$ and let $H_i:=(H_{i-1})_{U_i}^q$, the $q$-book of $H_{i-1}$ along $U_i$, where $U_i$ is the set of all copies of $v_i$ in $H_{i-1}$.
The \emph{$q$-bookpile} $H(q)$ of $H$ is then the graph $H(q):=H_{r}$ after the full $r$-step iteration.
It is not hard to see that this graph $H(q)$ is independent of the initial enumeration and hence well-defined.
For example, if $H=K_r$ and $q=2$, then $H(2)$ is the line graph of an $r$-dimensional hypercube graph. 
This graph in fact appeared in~\cite{CHPS12} in a different context, which partly inspired our approach.
As each $H_i$ decomposes to $q$ edge-disjoint copies of~$H_{i-1}$, the $q$-bookpile $H(q)$ decomposes to $q^r$ edge-disjoint copies of $H$. 
To distinguish these, we say that the $q^r$ edge-disjoint $H$-subgraphs of $H(q)$ as the \emph{standard copies} of $H$ in $H(q)$.

\medskip

As already sketched, the following theorem together with the construction of connected common graphs $H$ with arbitrarily large chromatic number in~\cite{KVW22} implies \Cref{main}:
\begin{theorem}\label{blow_up_connected}
Let $H$ be a connected graph. For every positive integer~$k$, there exists $q=q(k,H)$ such that the $q$-bookpile $H(q)$ of $H$ is $k$-connected.
\end{theorem}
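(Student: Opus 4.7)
The plan is to describe $H(q)$ via explicit coordinates and to lift walks in $H$ to walks in $H(q)$, using the abundance of free parameters at each step to route around any small set $S$ of removed vertices.

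Enumerate $V(H)=\{v_1,\ldots,v_r\}$ as in the construction of $H(q)$. Unwinding the iterated $q$-book operation, every vertex of $H(q)$ admits a unique representation $(v_j,f)$ with $f\colon V(H)\setminus\{v_j\}\to[q]$, where $f(v_\ell)$ records the choice made at step $\ell$; two vertices $(v_i,f)$ and $(v_j,g)$ with $v_iv_j\in E(H)$ are adjacent iff $f$ and $g$ agree on $V(H)\setminus\{v_i,v_j\}$. In this language each $\mathbf{c}\in[q]^r$ gives a standard copy $C_{\mathbf{c}}$ of $H$ containing $(v_j,\mathbf{c}|_{V(H)\setminus\{v_j\}})$ for every $j$, and each edge of $H(q)$ lies in a unique $C_{\mathbf{c}}$. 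Fix $S\subseteq V(H(q))$ with $|S|\leq k-1$; I aim to prove $H(q)\setminus S$ is connected when $q$ is large enough in terms of $k$ and $r$.

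The main tool is a walk-lifting principle. For any walk $W=w_0w_1\cdots w_L$ in $H$ and any initial vertex $(w_0,f)\in H(q)$, each tuple $(x_1,\ldots,x_L)\in[q]^L$ produces a walk $(w_0,f)\to(w_1,\mathbf{c}_1)\to\cdots\to(w_L,\mathbf{c}_L)$ in $H(q)$ in which $\mathbf{c}_t(w_{t-1})=x_t$ and $\mathbf{c}_t$ agrees with $\mathbf{c}_{t-1}$ on all other defined coordinates. The crucial structural fact is that $\mathbf{c}_t$ is determined by $(x_1,\ldots,x_t)$, and varying $x_t$ alone moves $(w_t,\mathbf{c}_t)$ through $q$ options differing only in the $w_{t-1}$-coordinate; hence at most $|S|\leq k-1$ choices of this parameter produce an intermediate vertex in $S$. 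Using this I prove two lemmas: \textbf{(A)} for $q$ above a threshold $Q=Q(k,r)$, any two $v_j$-copies $(v_j,f),(v_j,g)\in H(q)\setminus S$ differing in a single coordinate $v_i$ are joined by a path in $H(q)\setminus S$, via the length-$2s$ closed walk that traverses a shortest $v_j$-to-$v_i$ path (with $s=\mathrm{dist}_H(v_i,v_j)\leq r-1$) and returns---here the endpoint constraint $\mathbf{c}_{2s}=g$ determines $x_{s+1},\ldots,x_{2s}$ while $x_1,\ldots,x_s$ remain free, giving $q^s$ candidate lifts of which at most $O((k-1)q^{s-1})$ enter $S$; and \textbf{(B)} by a similar lift of a shortest $v_i$-to-$v_j$ path, any vertex $(v_i,f)\in H(q)\setminus S$ is connected in $H(q)\setminus S$ to some $v_j$-copy.

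To conclude, the set of $v_j$-copies is naturally indexed by $[q]^{V(H)\setminus\{v_j\}}$, and two $v_j$-copies differ in exactly one coordinate iff they are adjacent in the Cartesian product of $r-1$ copies of $K_q$, which has vertex connectivity $(r-1)(q-1)$. For $q$ large, removing the at most $k-1$ bad $v_j$-copies leaves this Hamming graph connected, and Lemma (A) converts each remaining Hamming edge into a genuine path in $H(q)\setminus S$; hence all $v_j$-copies in $H(q)\setminus S$ are pairwise connected there. Lemma (B) then connects arbitrary vertices of $H(q)\setminus S$ to this common layer, proving $H(q)\setminus S$ is connected, and taking $q=q(k,H)$ above all thresholds yields the theorem. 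The main obstacle is the bookkeeping in Lemma (A): one must verify that after the endpoint condition $\mathbf{c}_{2s}=g$ consumes the backward half of the free parameters, the remaining $s$ parameters together with the one-coordinate-at-a-time structure of the lift still give enough slack to avoid $S$; this hinges on the observation that changing $x_t$ alone only perturbs $(w_t,\mathbf{c}_t)$ in the $w_{t-1}$-coordinate.
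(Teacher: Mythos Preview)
Your argument is correct and follows a genuinely different route from the paper. The paper introduces an auxiliary $r$-uniform hypergraph $\mathcal{H}_q^r$ on $V(H(q))$ whose edges are the standard $H$-copies, proves by induction on $r$ that $\mathcal{H}_q^r$ is $k$-connected in the sense of admitting $k$ internally vertex-disjoint hypergraph paths between any two vertices (this is Proposition~\ref{hypergraph_connected}, proved via a Menger argument inside one layer), and then lifts those hypergraph paths to graph paths in $H(q)$ using connectedness of $H$. You instead invoke the dual characterisation of $k$-connectivity---that $H(q)\setminus S$ stays connected whenever $|S|\le k-1$---and work directly in $H(q)$: walks in $H$ are lifted to walks in $H(q)$ with one free parameter per step, and a counting argument shows that enough lifts avoid $S$. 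Both proofs rest on the same explicit coordinate description of $V(H(q))$ and both use the vertex-connectivity $(r-1)(q-1)$ of the Cartesian power of $K_q$ (the paper for the auxiliary graph on $U_r$ in Lemma~\ref{U-U}, you for the layer of $v_j$-copies). Your approach is more self-contained, avoiding the hypergraph formalism and the induction on $r$; the paper's approach, by actually exhibiting $k$ internally disjoint paths, yields slightly more explicit structure.

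One point to make explicit when you write up Lemma~(A): the backward half of the lifted walk also depends on the free parameters---the vertex at position $s+t'$ is a function of $x_1,\ldots,x_{s-t'}$---so the union bound must run over all $2s-1$ intermediate positions, not only the forward ones. Since varying the last relevant free parameter still sweeps out $q$ distinct vertices at each such position, the bound $(2s-1)(k-1)q^{s-1}$ on bad tuples holds and $q>(2r-3)(k-1)$ suffices; your sketch is consistent with this, but the ``changing $x_t$ alone perturbs $(w_t,\mathbf{c}_t)$'' phrasing only covers the forward positions.
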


To analyse connectivity of $H(q)$, we consider an auxiliary hypergraph on $V(H(q))$ whose edge set consists of the standard copies of $H$ in $H(q)$.
We shall first describe what this hypergraph looks like.

Write $[q]:=\{1,2,\cdots,q\}$ and let $\alpha$ be a variable. Let $V(q,r,\alpha)$ be the set of $r$-tuples $v=(n_1,n_2,\cdots,n_r)$, where all but exactly one entry are in $[q]$ and the  one exceptional entry is $\alpha$.
We call this unique entry the \emph{$\alpha$-bit} of $v$.
Let $\HH_{q}^r$ be the $r$-uniform hypergraph on $V(q,r,\alpha)$ with the edge set $[q]^r$, where a vertex $v=(n_1,n_2,\cdots,n_r)$ with $n_i=\alpha$ is incident to an edge $e$ if substituting $\alpha$ by an integer value in $[q]$ gives the edge $e\in [q]^r$.
Note that $\HH_{q}^r$ is always a linear $r$-graph. Indeed, the codegree of a vertex pair is one if they share all the non-$\alpha$-bits and zero otherwise. 
In particular, if $q=2$, then this is the line hypergraph of the $r$-dimensional hypercube graph.

\begin{proposition}
Let $\HH$ be the auxiliary $r$-graph on $V(H(q))$ whose edge set consists of the standard copies of $H$ in $H(q)$. Then $\HH$ is isomorphic to $\HH_q^r$.
\end{proposition}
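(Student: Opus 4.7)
The plan is to construct the isomorphism explicitly by tracking, through the iterative construction $H_0, H_1, \ldots, H_r = H(q)$, labels on vertices and standard copies that record how each was produced by the successive $q$-book operations. I would begin by indexing the $q^i$ standard copies of $H$ in $H_i$ by tuples in $[q]^i$, where the $k$-th coordinate $m_k$ records which of the $q$ copies of $H_{k-1}$ was used at step $k$. After $r$ steps this labels the standard copies of $H(q)$ by $[q]^r = E(\HH_q^r)$.

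Next, I would inductively label each vertex of $H_i$ by a pair $(n_1, \ldots, n_i; v_j)$ with $v_j \in V(H)$, where for each $k \leq i$ one sets $n_k = \alpha$ if $k = j$ and $n_k \in [q]$ otherwise. The induction step follows directly from the definition of the $q$-book: at step $i$, every non-$v_i$-vertex gains a new coordinate $n_i = m_i \in [q]$ recording which copy of $H_{i-1}$ it lived in, while the $v_i$-clones across the $q$ copies are identified, which we encode by setting $n_i = \alpha$. A short case analysis confirms that the invariant is preserved and that the labeling is well-defined under the identifications. After $r$ iterations every vertex label carries exactly one $\alpha$, so dropping the redundant symbol $v_j$ yields a bijection onto $V(q, r, \alpha)$.

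Finally, I would verify incidence by tracing the label of the $v_j$-vertex inside a standard copy $(m_1, \ldots, m_r)$: the coordinate at position $j$ is set to $\alpha$ precisely at step $j$, while every other coordinate $k$ is set to $m_k$ at step $k$. Hence that vertex has label $(m_1, \ldots, m_{j-1}, \alpha, m_{j+1}, \ldots, m_r)$, so a vertex with $\alpha$-bit at position $j$ lies in the edge $(m_1, \ldots, m_r)$ if and only if $n_k = m_k$ for all $k \neq j$ with $m_j$ free. This matches the incidence in $\HH_q^r$ exactly. The main obstacle is the bookkeeping around the $q$-book identifications: one must check that two vertex labels agreeing outside the $j$-th coordinate indeed correspond to identified vertices at step $j$, and conversely that distinct labels never collide. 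Once this routine verification is carried out, the isomorphism is immediate.
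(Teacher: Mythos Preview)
Your proposal is correct and follows essentially the same approach as the paper: both construct the labeling $\phi$ that assigns to a copy of $v_j$ the tuple $(n_1,\ldots,n_r)$ with $n_j=\alpha$ and $n_k\in[q]$ recording the copy index at step~$k$, and both verify the isomorphism by noting that two vertices lie in the same standard $H$-copy if and only if they agree on all non-$\alpha$-bits. Your write-up is somewhat more explicit about the inductive bookkeeping at intermediate stages $H_i$, but the argument is the same.
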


\begin{proof}
At the $i$-th iteration of the blow-up procedure, each copy of $v_j$, $j\neq i$, is replaced by $q$ copies of it, each of which is in the edge-disjoint copies of $H_{i-1}$ glued along copies of $v_i$'s. 
By enumerating the $q$ edge-disjoint copies of $H_{i-1}$ in $H_i$, we label each copy of $v_j\in V(H)$ by a vector $(n_1,n_2,\cdots,n_r)\in V(q,r,\alpha)$, where $n_j=\alpha$ and $n_i$, $i\neq j$, indicates that the vertex is in the $n_i$-th copy of $H_{i-1}$ in $H_{i}$.
Let $\phi:V(\HH)\rightarrow V(q,r,\alpha)$ be this labelling map.

We claim that this function $\phi$ is an isomorphism from $\HH$ to $\HH_q^r$.
Indeed, two vertices labelled by $(n_1,n_2,\cdots,n_r)$ and $(m_1,m_2,\cdots,m_r)$, respectively, are in the same standard $H$-copy if and only if $m_i=n_i$ for all $i$ except their $\alpha$-bits.
Hence, $r$ vertices in $V(\HH)$ form an edge if and only if their labels by $\phi$ in $V(q,r,\alpha)$ form an edge in $V(\HH_q^r)$, which proves the claim.
\end{proof}

From now on, we shall identify the $r$-graph $\HH$ with $\HH_{q}^r$.
In a linear hypergraph, a \emph{path} $P$ from a vertex $u$ to another vertex $v$ is an alternating sequence $v_0e_1v_1e_2\cdots v_{\ell-1} e_\ell v_{\ell}$ of vertices and edges, where $v_0=u$, $v_\ell=v$, $\{v_i,v_{i+1}\}\subseteq e_{i+1}$, and any non-consecutive edges are disjoint. 
Two paths  $ue_1v_1e_2\cdots v_{\ell-1}e_\ell v$ and  $ue_1'v_1'e_2'\cdots v_{t-1}'e_t'v$ from $u$ to~$v$ are \emph{internally vertex-disjoint} if $e_1\cap e_1'=\{u\}$, $e_\ell\cap e_t'=\{v\}$, and all the other pairs $e_i$ and $e_j'$ are disjoint.
 
We say that the two paths are \emph{vertex-disjoint} if all edges of one path are disjoint from all edges of the other. Multiple paths $P_1,P_2,\cdots,P_k$ are \emph{(internally) vertex-disjoint} if they are pairwise (internally) vertex-disjoint.
An $r$-graph $\GG$ is \emph{$k$-connected} if there are at least $k$ internally vertex-disjoint paths from a vertex $u$ to another vertex~$v$ for all pairs of distinct vertices $u$ and $v$. We show that $\HH_q^r$ is highly connected in this sense for large enough $q$ in the following proposition, whose proof will be postponed for a while.

\begin{proposition}\label{hypergraph_connected}
For integers $k,r\geq 2$, there exists $q=q_{k,r}$ such that $\HH_q^r$ is $k$-connected.
\end{proposition}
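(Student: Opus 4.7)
My plan is to prove the proposition by directly constructing $k$ internally vertex-disjoint paths between any two vertices of $\HH_q^r$. The key reformulation is that each hyperedge of $\HH_q^r$ is naturally identified with a tuple in $[q]^r$, and two hyperedges share a hypergraph vertex if and only if their tuples differ in exactly one coordinate. A hypergraph path $u\,e_1\,v_1\,e_2\cdots v_{\ell-1}\,e_\ell\,v$ thus corresponds to a walk $e_1,\dots,e_\ell$ in $[q]^r$ where consecutive tuples are at Hamming distance one, $e_1$ lies in the ``line'' $N(u)=\{e\in[q]^r:e_i=u_i\text{ for all }i\neq j\}$ (with $j$ the $\alpha$-bit position of $u$), and $e_\ell$ lies in $N(v)$. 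Under this identification, the internal vertex-disjointness condition translates to: for every pair of tuples from different walks, apart from the boundary pairs at $u$ and $v$, the two tuples must be at Hamming distance at least $2$ in $[q]^r$.

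With this setup, I proceed by induction on the number of coordinates at which $u$ and $v$ disagree (treating $\alpha$ as disagreeing with every value in $[q]$). In the base case, when $u$ and $v$ lie in a common hyperedge, I construct $k$ internally vertex-disjoint paths of length at most $2$ explicitly: one length-$1$ path through the common hyperedge, and $k-1$ length-$2$ paths indexed by distinct pairs $(a_i,b_i)\in[q]^2$ chosen so that their middle tuples differ pairwise in both ``free'' coordinates; this forces Hamming distance at least $2$ between them, and it only requires $q\geq k$. In the inductive step, I resolve one coordinate disagreement for all $k$ walks simultaneously via length-$2$ bridges, reducing to a pair $(u',v)$ with one fewer disagreement to which the inductive hypothesis applies. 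The $k$ bridges are parametrized by distinct labels $t_1,\dots,t_k\in[q]$: each label $t_i$ occupies a ``carrier'' coordinate in all bridge hyperedges of walk $i$, guaranteeing that bridge hyperedges of different walks differ in that coordinate, and combined with a disagreement in the newly resolved coordinate, this yields the required Hamming distance of at least $2$.

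The hardest part will be keeping the $k$ bridges disjoint from the interiors of the \emph{other} walks' recursively constructed sub-paths, since those sub-paths will themselves use labels from $[q]$. A disciplined book-keeping is therefore needed: at each level of the recursion, allocate a fresh batch of $k$ labels from $[q]$, disjoint from the batches used at all other levels. Additional care is needed at the boundary, where the first and last hyperedges are constrained to lie in the lines $N(u)$ and $N(v)$ respectively, but a few more distinct parameter choices suffice to handle this. A naive accounting suggests that a threshold $q_{k,r}$ polynomial in $k$ and $r$ is enough to complete the induction and thereby prove the proposition.
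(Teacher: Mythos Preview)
Your approach is genuinely different from the paper's. The paper inducts on the uniformity $r$, not on the Hamming distance between $u$ and $v$: it decomposes $\HH_q^r$ into $q$ sub-hypergraphs $\HH_1,\dots,\HH_q$, each an $r$-extension of a copy of $\HH_q^{r-1}$, glued along the set $U_r$ of vertices whose $\alpha$-bit sits in the last coordinate. The engine is a separate lemma that, for any $U,V\subseteq U_r$, produces many vertex-disjoint $U$--$V$ paths, one inside each $\HH_i$; that lemma is proved by applying Menger's theorem to the Cartesian-product graph $K_q^{r-1}$ on $U_r$. A short case analysis on whether $u,v$ lie in $U_r$ or in some $W_i$ then finishes the proof. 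Your explicit coordinate-by-coordinate routing is a legitimate alternative and would yield a more transparent bound on $q_{k,r}$.

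That said, there are two concrete problems with the sketch. First, the base case as written cannot work. When $u$ and $v$ share the (unique) hyperedge $e^*$, every $e\in N(u)$ and $f\in N(v)$ agree on all coordinates except possibly the two $\alpha$-bit positions $j,j'$; hence $d(e,f)\le 2$, with $d(e,f)=1$ only when $e=e^*$ or $f=e^*$. So every length-$2$ hypergraph path between $u$ and $v$ uses $e^*$ as one of its two edges and, for $r\ge 3$, shares the $r-2$ internal vertices of $e^*$ with the length-$1$ path $u\,e^*\,v$. You need length-$3$ paths here: take $e_1\in N(u)$ with $(e_1)_j=a\neq v_j$, $e_3\in N(v)$ with $(e_3)_{j'}=b\neq u_{j'}$, and let $e_2$ be the tuple with $(e_2)_j=a$, $(e_2)_{j'}=b$, and all other coordinates equal to $u_i=v_i$; varying $(a,b)$ over pairs that differ in both components gives the required disjoint family.

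Second, and more seriously, your inductive step does not preserve internal vertex-disjointness. If you apply the inductive hypothesis to a \emph{single} pair $(u',v)$, the $k$ paths it returns all start at the hypergraph vertex $u'$; concatenating $k$ bridges from $u$ onto these paths forces every resulting $u$--$v$ path to contain $u'$ as an internal vertex, so they are not internally vertex-disjoint. The carrier-label trick separates the bridge \emph{hyperedges}, but it cannot prevent the recursive paths from all meeting at $u'$. To salvage the scheme you must either strengthen the inductive hypothesis (e.g.\ to produce $k$ internally vertex-disjoint paths from $k$ prescribed, well-separated starting vertices to $v$), or abandon the black-box recursion and give a single explicit construction of all $k$ walks in $[q]^r$ with the Hamming-distance-$2$ condition verified globally across all levels and all coordinates---not just the carrier coordinates.
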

Let $W_i$, $i\in [q]$, and $U_r$ be subsets of $V(q,r,\alpha)$ defined by
\begin{align*}
    W_i:=\{v=(n_1,\cdots,n_r):n_r=i\}~\text{ and }~U_r:=\{u= (m_1,\cdots,m_r): m_r=\alpha\}.
\end{align*}
Let $\HH_i = \HH_q^r[W_i\cup U_r]$ for brevity. The \emph{$r$-extension} of an $(r-1)$-graph $\GG$ is the $r$-graph obtained by adding $e(\GG)$ extra vertices, each of which is added to a unique $(r-1)$-uniform edge in $\GG$. Then $\HH_i$ is isomorphic to the $r$-extension of a copy of the $(r-1)$-graph $\HH_q^{r-1}$ on $W_i$ by the isomorphism that maps each vertex $v=(n_1,\cdots,n_{r-1},i)$ in $\HH_i$ to $(n_1,\cdots,n_{r-1})\in V(q,r-1,\alpha)$ and $(n_1,\cdots,n_{r-1},\alpha)$ to the extra vertex added to extend the edge $(n_1,\cdots,n_{r-1})$.

For vertex subsets $U$ and $V$, a $U$--$V$ path is a path $v_0e_1v_1e_2\cdots v_{\ell-1} e_\ell v_\ell$ such that $v_0\in U$, $v_\ell\in V$, and $v_i\notin U\cup V$ for each $i$ distinct from $0$ and $\ell$.

\begin{lemma}\label{U-U}
For $r\geq 3$ and $1\leq s\leq q$, let $U$ and $V$ be subsets of $U_r$ of size at least $s$. Then there exist vertex-disjoint $U$--$V$ paths $Q_1,Q_2,\cdots,Q_s$ such that each $Q_i$ is a path in $\HH_i$.
\end{lemma}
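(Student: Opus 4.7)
The plan is as follows. First observe that, via the isomorphism between $\HH_i$ and the $r$-extension of $\HH_q^{r-1}$, each vertex of $U_r$ is the pendant of a single edge in $\HH_i$, and hence has degree exactly one in $\HH_i$. Consequently, every path in $\HH_i$ meets $U_r$ only at its two endpoints, and all its internal vertices lie in $W_i$. Since $W_1, \ldots, W_s$ are pairwise disjoint and disjoint from $U_r$, two paths $Q_i$ in $\HH_i$ and $Q_j$ in $\HH_j$ with $i\neq j$ can share a vertex only among their $U_r$-endpoints. The $U$--$V$ path condition (no internal vertex lying in $U\cup V$) is also automatic, as $U\cup V\subseteq U_r$ while all internal vertices sit in $W_i$. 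Thus the problem reduces to selecting pairwise disjoint endpoint-pairs and then producing a $u_i$-to-$v_i$ path within each $\HH_i$.

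For the endpoint selection, using $|U|,|V|\geq s$ one can pick distinct $u_1,\ldots,u_s\in U$ and distinct $v_1,\ldots,v_s\in V$ such that $u_i=v_j$ forces $i=j$: fix any $s$-subset $X=\{u_1,\ldots,u_s\}$ of $U$, set $v_i:=u_i$ whenever $u_i\in V$, and assign the remaining indices distinct elements of $V\setminus X$, which is possible because $|V\setminus X|=|V|-|X\cap V|\geq s-|X\cap V|$. With this choice the endpoint-disjointness from the first paragraph is satisfied.

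For the path construction, if $u_i=v_i$ let $Q_i$ be the trivial single-vertex path. Otherwise, write $u_i=(m_1,\ldots,m_{r-1},\alpha)$ and $v_i=(m'_1,\ldots,m'_{r-1},\alpha)$, enumerate the nonempty set $D=\{j:m_j\neq m'_j\}$ as $j_1,\ldots,j_d$, and for $k=0,1,\ldots,d$ let $\bar f_k\in [q]^{r-1}$ be obtained from $(m_1,\ldots,m_{r-1})$ by replacing the coordinates $j_1,\ldots,j_k$ with $m'_{j_1},\ldots,m'_{j_k}$. Viewed as edges of $\HH_i$ through the isomorphism, the sequence $\bar f_0,\bar f_1,\ldots,\bar f_d$ traces a path in $\HH_i$ from $u_i$ (the pendant of $\bar f_0$) to $v_i$ (the pendant of $\bar f_d$): consecutive edges differ in exactly one coordinate and share a unique $W_i$-vertex, while non-consecutive edges differ in at least two coordinates and are therefore vertex-disjoint.

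The only conceptual step is the pendant observation, which converts the apparent global vertex-disjointness requirement across the $\HH_i$'s into a purely endpoint-selection problem; the remaining steps are bookkeeping. I do not expect significant obstacles beyond this, and the hypothesis $r\geq 3$ enters only to ensure that $\HH_q^{r-1}$ has uniformity at least $2$, so that the coordinate-flip construction genuinely produces a hypergraph path.
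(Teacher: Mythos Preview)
Your pendant observation is correct, but it does not do the work you claim. In the paper, two hypergraph paths are \emph{vertex-disjoint} when every edge of one is disjoint (as a vertex set) from every edge of the other; it is not enough that the alternating-sequence vertices be disjoint. Every edge of $\HH_i$ contains exactly one vertex of $U_r$, so each intermediate edge $\bar f_k$ of your path $Q_i$ carries a ``hidden'' $U_r$-pendant $(\bar f_k,\alpha)$ that need not be $u_i$ or $v_i$. These hidden pendants can coincide across different $Q_i$'s. For instance, with $r=3$, $U=\{(1,1,\alpha),(1,2,\alpha)\}$, $V=\{(2,2,\alpha),(2,1,\alpha)\}$ and the coordinate-flip order $(1,2)$, your $Q_1$ uses the edge $(2,2,1)$ and your $Q_2$ uses the edge $(2,2,2)$, and both contain the $U_r$-vertex $(2,2,\alpha)$; hence $Q_1,Q_2$ are not vertex-disjoint in the required sense. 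In fact the sequence of $U_r$-pendants along your $Q_i$ is precisely a path in the auxiliary graph $G$ on $U_r$ (where $ww'\in E(G)$ iff $w,w'$ differ in one coordinate), so what you need is $s$ vertex-disjoint $U$--$V$ paths in $G$, not merely $s$ disjoint endpoint-pairs.

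This is exactly what the paper supplies: it observes $G\cong K_q^{\,r-1}$ is $(r-1)(q-1)$-connected, applies Menger's theorem to obtain $s$ vertex-disjoint $U$--$V$ paths $P_1,\dots,P_s$ in $G$ (shortest, hence induced), and then lifts $P_i$ to a path $Q_i$ in $\HH_i$. Because the $U_r$-pendants along $Q_i$ are exactly $V(P_i)$, vertex-disjointness of the $P_i$'s gives disjointness of the $U_r$-parts, while the $W_i$'s are pairwise disjoint automatically. Your endpoint-selection step and coordinate-flip construction are fine as far as they go, but they must be replaced (or preceded) by this Menger argument on $G$; otherwise the conclusion fails.
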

\begin{proof}
Consider the auxiliary graph $G$ on $U_r$ such that $ww'\in E(G)$ if and only if $w$ and~$w'$ share a neighbour in $\HH_i$. That is, $ww'$ is an edge in $G$ if $w=(n_1,\cdots,n_{r-1},\alpha)$ and $w'=(n_1',\cdots,n_{r-1}',\alpha)$ differ by exactly one entry, which is at the $\alpha$-bit of their common neighbour in $W_i$.
Hence, this graph~$G$ is isomorphic to the graph~$K_{q}^{r-1}$ obtained by taking Cartesian product of $r-1$ copies of $K_{q}$ and moreover, the graph $G$ is independent of the choice of $i\in [q]$.
In particular,~$G$ is $(r-1)(q-1)$-connected, see, e.g., Theorem~1 in~\cite{Spac08}.
By Menger's theorem, there are at least~$s$ vertex-disjoint $U$--$V$ paths in $G$, which we denote by $P_1,P_2,\cdots,P_s$, provided $(r-1)(q-1)\geq q\geq s$. 

\medskip

Our goal is to construct a $U$--$V$ path $Q_i$ in $\HH_i$ by using $P_i$. 
We may assume that $U$ and $V$ are disjoint, as otherwise, one may assign a trivial path at each vertex in the intersection and consider $U':=U\setminus V$ and $V'=V\setminus U$ instead of $U$ and $V$, respectively. It then suffices to find~$s'$ vertex-disjoint $U'$--$V'$ paths, $s'<s$, where induction on $s$ applies.

We choose vertex-disjoint paths $P_1,P_2,\cdots,P_{s}$ that minimise the sum of the length of each path. Then each $P_i$ is an induced path, i.e., there are no $G$-edges on $V(P_i)$ other than $E(P_i)$. To see this, let $P_i=u_0u_1\cdots u_\ell$ with $u_0=u$ and $u_\ell=v$.
If there is an edge $u_iu_j$ with $i+1<j$, then one can shorten the length of $P_i$ by replacing the path $u_iu_{i+1}\cdots u_j$ by $u_iu_j$. 
The internal vertices of the shorter path $P_i'$ is still non-empty as $uv\notin E(G)$ and disjoint from the internal vertices of other $P_j$'s, so we strictly reduce the sum of the $s$  vertex-disjoint paths.

Now each $U$--$V$ path $P_i=u_{0}u_{1}u_{2}\cdots u_{\ell}$ yields a $U$--$V$ path $Q_i$ in~$\HH_i$. 
Indeed, there exists a unique edge $e_{j}\in \HH_i$ containing $u_{j}$ such that $e_{j}$ and $e_{j+1}$ share a vertex~$w_{j}$ in $W_i$ by definition of~$G$.
Furthermore, two non-consecutive edges $e_j$ and $e_{j'}$, $j+1<j'$, are always disjoint, as otherwise $u_ju_{j'}\in E(G)$. Therefore, $u_0e_1u_1e_2\cdots u_{\ell-1}e_{\ell}u_\ell$ is a path in $\HH_i$. 
It then remains to check whether $Q_1,\cdots,Q_k$ in $\HH_q^r$ are vertex-disjoint. The vertices in $Q_i$ and $Q_j$ are in $W_i\cup U_r$ and $W_j\cup U_r$, respectively. Indeed, the two sets $W_i$ and $W_j$ are disjoint and the vertices of $Q_i$ and $Q_j$ in $U_r$ are disjoint too, as they are exactly vertices of $P_i$ and $P_j$, respectively.
\end{proof}

\begin{proof}[Proof of \Cref{hypergraph_connected}]
If $r=2$ then $\HH_q^r$ is a copy of $K_{q,q}$, which is $q$-connected. We may hence assume that $r\geq 3$. Take $q\geq \max\{q_{r-1,k}, 3(k+1)\}$ which is a multiple of $3$.
Let $u,v$ be distinct vertices in $\HH_q^r$. 
By induction on $r$, $\HH_q^{r-1}$ is $k$-connected. As $\HH_i$ is the $r$-extension of $\HH_q^{r-1}$, there are at least $k$ internally vertex-disjoint paths in $\HH_i$ from $u$ to~$v$ if both vertices are in $W_i$. 

Suppose that $u,v\in U_r$. For $1\leq i\leq q/3$, let $P_i$ be the path $ue_{i,1}w_ie_{i,2}u_i$ in $\HH_i$, i.e., $w_i\in W_i$, $u_i\in U_r$, and $e_{i,1}$ is the only edge in $\HH_i$ containing $u$. 
We may further assume that all~$u_i$'s are distinct, as there are $q-1$ neighbours of $w_j$ in $U_r$ except $u$. Analogously, take paths $P_j'=ve_{j,1}w_je_{j,2}v_j$ for $q/3<j\leq 2q/3$ in $\HH_j$ where $v_j$'s are all distinct.
Applying \Cref{U-U} with $U=\{u_i:1\leq i\leq q/3\}$, $V=\{v_j:q/3<j\leq 2q/3\}$, and $s=q/3$ gives $q/3$ vertex-disjoint $U$--$V$ paths, each of which uses a unique $\HH_t$ for some $t>2q/3$. Here we relabel $\HH_i$'s if necessary. Thus, concatenating these $U$--$V$ paths with $P_i$ and $P_j'$ yields at least $k$ internally vertex-disjoint paths from $u$ to $v$.  

Next, suppose that $u\in U_r$ and $v\in W_j$. For all $i\in[q/3]\setminus\{j\}$, we analogously collect paths $P_i$ of length two from $u$ such that each $P_i$ is in $\HH_i$ and ends at $u_i\in U_r$, where $u_i$'s are all distinct. There are $q$ neighbours of $v$ in $U_r$, which we denote by $N(v;U_r)$. 
Then again by \Cref{U-U}, there are at least $q/3-1$ vertex-disjoint $U$--$V$ paths from $U=\{u_i:i\in[q/3]\setminus\{j\}\}$ to $V=N(v;U_r)$, each of which uses distinct $\HH_t$ such that $t\neq j$ and $t>q/3$. Concatenating these $U$--$V$ paths with $P_i$'s and the edges incident to $v$ gives $k$ internally vertex-disjoint paths from $u$ to $v$.  

Lastly, suppose that $u\in W_i$ and $v\in W_j$ for $i\neq j$. Let $N(u;U_r)$ and $N(v;U_r)$ be neighbours of $u$ and $v$ in $U_r$, respectively. 
Then \Cref{U-U} gives $q$ vertex-disjoint $N(u;U_r)$--$N(v;U_r)$ paths. After deleting those paths in $\HH_i$ or $\HH_j$, there are still at least $k$ paths left, which allow us to make~$k$ internally vertex-disjoint path from $u$ to $v$. 
\end{proof}

\Cref{blow_up_connected} follows from the fact that internally disjoint paths in $\HH_q^r$ translate to internally disjoint paths in $H(q)$.

\begin{lemma}
Let $H$ be a connected graph and let $P_1, P_2,\cdots,P_k$ be $k$ internally vertex-disjoint paths from $u$ to $v$ in $\HH_q^r$.
Then there exist internally vertex-disjoint paths $Q_1,Q_2,\cdots Q_k$ from $u$ to $v$ in $H(q)$ such that each $Q_i$, $i\in[k]$, only uses those edges and vertices in the standard $H$-copies that correspond to edges in $P_i$.
\end{lemma}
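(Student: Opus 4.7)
The plan is to lift each hypergraph path $P_i$ to an honest graph path $Q_i$ in $H(q)$ by routing through the standard $H$-copies one edge of $P_i$ at a time; connectivity of $H$ is the only property we exploit, and the rest is bookkeeping with the linear-hypergraph structure of $\HH_q^r$.

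Concretely, I would write $P_i = v_0^{(i)} e_1^{(i)} v_1^{(i)} \cdots e_{\ell_i}^{(i)} v_{\ell_i}^{(i)}$ with $v_0^{(i)} = u$ and $v_{\ell_i}^{(i)} = v$. For each $j \in [\ell_i]$, both $v_{j-1}^{(i)}$ and $v_j^{(i)}$ lie in the standard $H$-copy indexed by $e_j^{(i)}$, which is a connected subgraph of $H(q)$ isomorphic to $H$. Choose any simple path $R_j^{(i)}$ from $v_{j-1}^{(i)}$ to $v_j^{(i)}$ inside that standard copy and let $Q_i$ be the concatenation $R_1^{(i)} R_2^{(i)} \cdots R_{\ell_i}^{(i)}$. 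By construction, every edge and every vertex of $Q_i$ lies in a standard $H$-copy corresponding to some edge of $P_i$, which already yields the trailing clause of the lemma.

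It then remains to check that each $Q_i$ is a simple path and that $Q_1, \dots, Q_k$ are internally vertex-disjoint. Both properties follow from linearity of $\HH_q^r$: consecutive edges $e_{j-1}^{(i)}$ and $e_j^{(i)}$ meet only at the transition vertex $v_{j-1}^{(i)}$, non-consecutive edges of $P_i$ are disjoint, and, by the internal vertex-disjointness hypothesis, edges of $P_i$ and $P_{i'}$ for $i \neq i'$ intersect only in $\{u\}$ or $\{v\}$. Since each $R_j^{(i)}$ is simple, its internal vertices are neither $v_{j-1}^{(i)}$ nor $v_j^{(i)}$, so they live exclusively in the standard copy of $e_j^{(i)}$. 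Consequently, they cannot reappear in any $R_{j'}^{(i)}$ (giving simplicity of $Q_i$), and for any $w \in V(Q_i) \setminus \{u, v\}$ the vertex $w$ lies in some $e_j^{(i)}$ and hence in no edge of $P_{i'}$, so $w \notin V(Q_{i'})$ (giving internal vertex-disjointness across $i$).

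I do not anticipate any real obstacle here: the only subtlety is to correctly identify the vertex sets of the standard $H$-copies with the edges of $\HH_q^r$, after which the simplicity and internal-disjointness checks are essentially immediate from the definitions.
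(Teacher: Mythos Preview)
Your proposal is correct and follows essentially the same approach as the paper's own proof: lift each hypergraph edge $e_j^{(i)}$ to a path inside the corresponding standard $H$-copy using connectivity of $H$, concatenate, and then verify internal vertex-disjointness from the intersection pattern of the hyperedges. If anything, you are slightly more thorough, since you also check that each $Q_i$ is itself a simple path, a point the paper leaves implicit.
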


\begin{proof}
Let $P_i = v_{i,0}e_{i,1}v_{i,1}e_{i,2}\cdots v_{i,\ell_{i-1}}e_{i,\ell_i} v_{i,\ell_i}$.
As $H$ is connected, there exists a path $Q_{i,j+1}$ from~$v_{i,j}$ to $v_{i,j+1}$ in the standard copy of $H$ that corresponds to the edge $e_{i,j+1}$.
For paths $P$ from $x$ to~$y$ and~$P'$ from $y$ to $z$, we write $xPyP'z$ for the concatenation of the two paths from $x$ to $z$.
For each $i\in[k]$,
let $Q_i=v_{i,0}Q_{i,1}v_{i,1}Q_{i,2}\cdots v_{i,\ell_{i-1}}Q_{i,\ell_i}v_{i,\ell_i}$.
We claim that these paths $Q_1,Q_2,\cdots,Q_k$ are internally vertex-disjoint. Indeed, $Q_{i,1}$ and $Q_{i',1}$ are in the $H$-copies that correspond to $e_{i,1}$ and~$e_{i',1}$, respectively, who share the vertex $u=v_{i,0}=v_{i',0}$ only; by the same reason, $Q_{i,\ell_i}$ and $Q_{i',\ell_{i'}}$ are also disjoint except the vertex $v=v_{i,\ell_i}=v_{i',\ell_{i'}}$;
the other $Q_{i,j}$ and $Q_{i',j'}$ are vertex-disjoint, since~$e_{i,j}$ and $e_{i',j'}$ are disjoint edges in $\HH_q^r$.
\end{proof}

\section{Concluding remarks}

After~\Cref{main}, it would be natural to ask for examples of common graphs that are even more challenging to find.
We suggest to find a common graph with arbitrarily large girth, chromatic number, and connectivity. 

\begin{question}\label{openq}
Let $r,k,g\geq 3$ be integers.
Does there exist an $r$-chromatic $k$-connected common graph with girth at least $g$?
\end{question}

We believe that such a common graph exists;
however, our blown-up graph $H(q)$ in~\Cref{main} may decrease the girth of $H$, as the construction produces 4-cycles whenever $q\geq 2$ and $E(H)$ is nonempty. This suggests that solving~\Cref{openq} might require new ideas.

\vspace{5mm}

\noindent\textbf{Acknowledgements.} This work is supported by the National Research Foundation of Korea (NRF) grant funded by the Korea government MSIT NRF-2022R1C1C1010300 and Samsung STF Grant SSTF-BA2201-02. The second author was also supported by IBS-R029-C4. We would like to thank Dan Kr\'a\v{l}, Jan Volec, and Fan Wei for helpful discussions.
We are also grateful to anonymous referees for their careful reviews.

\bibliographystyle{plainurl}
\bibliography{references}

\end{document}